\numberwithin{equation}{section}
\newtheorem{theorem}[equation]{Theorem}
\newtheorem{lemma}[equation]{Lemma}
\newtheorem{corollary}[equation]{Corollary}
\theoremstyle{definition}
\newtheorem{example}[equation]{Example}
\newtheorem{identity}{Identity}
\theoremstyle{remark}
\newtheorem{remark}{Remark}
\newcommand{\Y}{\mathscr{Y}}
\newcommand{\seqnum}[1]{\href{http://oeis.org/#1}{\underline{#1}}}
\title[On the enumeration of restricted words]%
{On the enumeration of restricted words over \\ a finite alphabet}
\author{Daniel Birmajer}
\address{Department of Mathematics\\ Nazareth College\\ 4245 East Ave.\\ Rochester, NY 14618}
\author{Juan B. Gil}
\address{Penn State Altoona\\ 3000 Ivyside Park\\ Altoona, PA 16601}
\author{Michael D. Weiner}
\begin{document}
\maketitle

\begin{abstract}
We present a method for the enumeration of restricted words over a
finite alphabet. Restrictions are described through the inclusion or
exclusion of suitable building blocks used to construct the words by
concatenation. Our approach, which relies on the invert transform and
its representation in terms of partial Bell polynomials, allows us to
generalize and address in a systematic manner previous results in the
subject.
\end{abstract}

\section{Introduction}
\label{sec:introduction}

Motivated by a recent paper of Janji\'c \cite{Janjic15}, we examine the problem of enumerating words over a finite alphabet using the invert transform, partial Bell polynomials, and building blocks 
that allow us to effectively manage a wide variety of restrictions.

Recall that, given a sequence $x=(x_n)$, its {\sc invert} transform $\Y(x)=(y_n)$ is defined by
\begin{equation*}
 1+\sum_{n=1}^\infty y_n t^n = \frac{1}{1-\sum\limits_{n=1}^\infty x_n t^n},
\end{equation*} 
see, e.g., the paper by Cameron \cite{Cameron}, and the work by Bernstein and Sloane \cite{BernsteinSloane}. In short, if $X(t) = \sum_{n=1}^\infty x_n t^n$ and $Y(t) = \sum_{n=1}^\infty y_n t^n$, then 
\begin{equation*}
  1 + Y(t) = \frac{1}{1-X(t)}, \text{ and so }\, Y(t) = \frac{X(t)}{1-X(t)}.
\end{equation*} 
This map is clearly invertible on the space of linear recurrence sequences with constant coefficients, and we have $\Y^m(x) = \frac1m \Y(mx)$ for every $m\in\mathbb{N}$. Indeed, inductively, if $Y_{m}(t)$ denotes the generating function of $\Y^m(x)$, then
\begin{equation*}
  Y_{m}(t) = \frac{Y_{m-1}(t)}{1-Y_{m-1}(t)} = \frac{\frac{X(t)}{1-(m-1)X(t)}}{1-\frac{X(t)}{1-(m-1)X(t)}}= \frac{X(t)}{1-mX(t)}.
\end{equation*}

\medskip
The following statement is straightforward.

\begin{lemma}
For $b\in\mathbb{N}$, the invert transform of $x_n=b^{n-1}$ is given by $y_n=(b+1)^{n-1}$. Thus for $n>1$, $y_n$ counts the number of words of length $n-1$ over the alphabet $\{0,1,\dots,b\}$. Moreover, by applying the invert transform $m$ times, we get that $\Y^m(x)$ counts the words over the alphabet $\{0,1,\dots,b+m-1\}$.
\end{lemma}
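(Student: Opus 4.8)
The plan is to work entirely with generating functions, since the invert transform is defined through them, and then to read off the combinatorial meaning of the resulting coefficients. First I would compute the generating function of the input sequence $x_n=b^{n-1}$. Summing the geometric series, and keeping track of the index shift (the exponent is $n-1$, not $n$), gives
\[
  X(t) = \sum_{n=1}^\infty b^{n-1}t^n = \frac{t}{1-bt}.
\]
Substituting this into the defining relation $1+Y(t)=\frac{1}{1-X(t)}$ and simplifying, I expect the factor $1-bt$ to cancel and leave
\[
  Y(t) = \frac{X(t)}{1-X(t)} = \frac{t}{1-(b+1)t}.
\]
Expanding the right-hand side as a geometric series in $(b+1)t$ and comparing coefficients yields $y_n=(b+1)^{n-1}$, which is the first assertion.

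For the counting interpretation I would invoke the fundamental counting principle: a word of length $\ell$ over an alphabet of $a$ symbols is an independent choice among $a$ possibilities in each of the $\ell$ positions, so there are $a^\ell$ such words. Taking $a=b+1$, the size of $\{0,1,\dots,b\}$, and $\ell=n-1$ gives exactly $(b+1)^{n-1}=y_n$, so $y_n$ enumerates the words of length $n-1$ over $\{0,1,\dots,b\}$.

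For the iterated statement I would reuse the formula already established in the excerpt, namely that the generating function of $\Y^m(x)$ is $Y_m(t)=\frac{X(t)}{1-mX(t)}$. Substituting $X(t)=\frac{t}{1-bt}$ and clearing denominators, the same cancellation as before produces
\[
  Y_m(t) = \frac{t}{1-(b+m)t} = \sum_{n=1}^\infty (b+m)^{n-1}\,t^n.
\]
Hence the $n$-th entry of $\Y^m(x)$ equals $(b+m)^{n-1}$, which by the same counting principle is the number of words of length $n-1$ over an alphabet of $b+m$ symbols, i.e.\ over $\{0,1,\dots,b+m-1\}$.

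Since every step reduces to a routine geometric-series manipulation, there is no genuine obstacle here; the only care required is the consistent bookkeeping of the index offset between $x_n=b^{n-1}$ and the exponent appearing in $X(t)$, together with the observation that the factor $1-bt$ cancels cleanly in both the single and the iterated computations.
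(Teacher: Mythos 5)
Your proposal is correct and is exactly the ``straightforward'' verification the paper has in mind: it gives no proof of this lemma, but the generating-function identity $Y_m(t)=\frac{X(t)}{1-mX(t)}$ that you reuse is derived immediately before the statement, and substituting $X(t)=\frac{t}{1-bt}$ to get $\frac{t}{1-(b+m)t}$ is the intended computation. All steps check out, including the index bookkeeping and the counting interpretation via $|\{0,1,\dots,b+m-1\}|=b+m$.
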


\begin{corollary}
Let $x=(x_n)$ be a sequence of nonnegative integers. If $x_n\le b^{n-1}$ for all $n\in\mathbb{N}$, then the elements of $\Y^m(x)$ count some restricted words over the alphabet $\{0,1,\dots,b+m-1\}$.
\end{corollary}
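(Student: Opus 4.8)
The plan is to make the phrase ``restricted words'' concrete by exhibiting, for each $n$, an explicit family of words of length $n-1$ over $\{0,1,\dots,b+m-1\}$ whose cardinality equals the $n$-th entry of $\Y^m(x)$. The starting point is the identity derived above, namely $Y_m(t)=\dfrac{X(t)}{1-mX(t)}=\sum_{j\ge1}m^{j-1}X(t)^j$, which I would read combinatorially: the $n$-th coefficient of $Y_m$ is $\sum_{j\ge1}m^{j-1}\sum_{k_1+\dots+k_j=n}x_{k_1}\cdots x_{k_j}$, a weighted count of ordered sequences of ``blocks'' of total length $n$, where $x_k$ is the number of available blocks of length $k$ and each of the $j-1$ internal junctions between consecutive blocks carries one of $m$ colors.

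First I would realize the blocks inside the target alphabet. I split $\{0,\dots,b+m-1\}$ into $b$ \emph{internal} letters $\{0,\dots,b-1\}$ and $m$ \emph{separator} letters $\{b,\dots,b+m-1\}$. Since $x_k\le b^{k-1}$ and $b^{k-1}$ is exactly the number of internal words of length $k-1$, I can fix once and for all an injection sending each of the $x_k$ block labels of length $k$ to a distinct internal word of length $k-1$. A word $w$ of length $n-1$ is then declared \emph{admissible} if, upon appending a fixed virtual terminal separator and cutting $w$ immediately after every separator letter, each resulting block of length $k$ has its internal part equal to one of the $x_k$ chosen words of length $k-1$.

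Next I would check that this prescription is a bijection: the cut positions are determined by the separator letters, so the decomposition of an admissible word into (internal part, separator) pieces is unique, and conversely any sequence of chosen blocks together with a color in $\{1,\dots,m\}$ at each internal junction reassembles into exactly one admissible word of length $n-1$, the virtual terminal separator being dropped. Counting these words reproduces the coefficient $\sum_{j\ge1}m^{j-1}\sum_{k_1+\dots+k_j=n}x_{k_1}\cdots x_{k_j}$ term by term, so the admissible words are precisely those counted by $\Y^m(x)$. In particular the entries of $\Y^m(x)$ are nonnegative integers, and since the admissible words form a subset of all words they are bounded above by the total count $(b+m)^{n-1}$ guaranteed by the Lemma, which confirms that ``restricted words over $\{0,\dots,b+m-1\}$'' is the correct description.

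The main obstacle I anticipate is the bookkeeping at the two ends of a word: the length shift (entry $n$ corresponds to words of length $n-1$) is absorbed by the single virtual terminal separator, so the last block contributes only its internal part to $w$, and one must verify this is consistent with the asymmetric factor $m^{j-1}$, in which only the $j-1$ internal junctions are colored while the terminal is fixed. Pinning down that the bijection reproduces the coefficient exactly, with no over- or under-counting and with the degenerate case of length-$1$ blocks (where $x_1\le 1$ and the internal part is empty) handled correctly, is where the argument must be made precise; everything else is a routine unwinding of the generating-function identity.
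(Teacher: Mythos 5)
Your proposal is correct, but it takes a genuinely different --- and much more constructive --- route than the paper's own proof of this corollary. The paper disposes of the statement in one line: since $x_n\le b^{n-1}$ termwise, the coefficients of $\bigl(1-\sum x_nt^n\bigr)^{-1}$ are dominated by those of $\bigl(1-\sum b^{n-1}t^n\bigr)^{-1}$, and the latter count \emph{all} words over $\{0,1,\dots,b+m-1\}$ by the preceding Lemma; hence the former count ``some'' subsets of these. That argument only establishes the numerical bound and leaves the identification of an actual family of restricted words implicit. You instead exhibit the family explicitly, by expanding $Y_m(t)=\sum_{j\ge1}m^{j-1}X(t)^j$, choosing an injection of the $x_k$ block labels into the $b^{k-1}$ internal words of length $k-1$, and realizing the factor $m^{j-1}$ through a choice of separator letter in $\{b,\dots,b+m-1\}$ at each of the $j-1$ internal junctions (with a virtual terminal separator absorbing the length shift). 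This is essentially the content the paper develops later in Section~2 --- the building blocks $W_b(j)$, the subsets $U_j$ with $|U_j|=f_0(j)$, and the sets $\mathscr{U}_{b,n}^m(f_0)$ of Corollary~\ref{cor:m_words} --- except that you place the distinguished letter at the \emph{end} of each block rather than at the beginning, which is a harmless mirror image. Your bookkeeping (uniqueness of the cut positions, the $m^{j-1}$ versus $m^j$ issue at the terminal block, and the degenerate length-one blocks with $x_1\le1$) is sound. What your approach buys is an explicit combinatorial description of the counted words already at the level of this corollary; what the paper's one-liner buys is brevity, at the cost of deferring that description to the next section.
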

\noindent
This is clear since, if $x_n\le w_n$ for every $n$, then $\big(1-\sum\limits_{n=1}^\infty x_n t^n\big)^{-1} \le \big(1-\sum\limits_{n=1}^\infty w_n t^n\big)^{-1}$.

\medskip
For $b=1$, several families of restricted words were considered by Janji\'c in \cite[Prop.~20]{Janjic15}. Consistent with op.\ cit., we let $f_m$ denote the $m$-th invert transform of a given sequence $f_0$, i.e., $f_m = \Y^m(f_0)$. In \cite[Corollary~24, 28, and 33]{Janjic15}, Janji\'c discussed three examples in which $f_0$ is a binary sequence. He posted the problem of finding, for an arbitrary sequence $f_0$, either a recurrence or an explicit formula for $f_m(n)$ in terms of $m$ and $n$, together with a description of the set of restricted words over the alphabet $\{0,1,\dots,m\}$ counted by $f_m(n)$. He also discussed two instances in which the sequence  $f_0$ is not binary but satisfies a second order linear recurrence relation. For all these examples, Janji\'c gives recurrence relations for $f_m$ obtained from the recurrence relation satisfied by $f_0$.

In this paper, we present a somewhat different method for the enumeration of restricted words that may be applied to a wide class of restrictions, see Section~\ref{sec:words}. Our approach relies on a family of building blocks that we utilize to construct words by concatenation, and on a representation of the invert transform in terms of partial Bell polynomials. We discuss several binary  and nonbinary examples (Sections~\ref{sec:binary} and \ref{sec:nonbinary}, respectively) and shed some light on the examples considered by Janji\'c \cite{Janjic15}. For all of the examples discussed here, we give closed combinatorial expressions. 

We finish the introduction by recalling that, by means of Fa{\`a} di Bruno's formula, the invert transform can be written in terms of partial Bell polynomials, see e.g.\ \cite{BGW15a}. Indeed, if $(y_n)$ is the invert transform of $x=(x_n)$, then
\begin{equation}\label{eqn:invertBell}
    y_n = \sum_{k=1}^n \frac{k!}{n!} B_{n, k} (1!x_1, 2! x_2, \dotsc) \;\text{ for } n\ge 1,
\end{equation}
where $B_{n,k}$ denotes the $(n,k)$-th partial Bell polynomial defined as
\begin{equation*}
  B_{n,k}(z_1,\dots,z_{n-k+1})=\sum_{\alpha\in\pi(n,k)} \frac{n!}{\alpha_1! \alpha_2!\cdots \alpha_{n-k+1}!}\left(\frac{z_1}{1!}\right)^{\alpha_1}\cdots \left(\frac{z_{n-k+1}}{(n-k+1)!}\right)^{\alpha_{n-k+1}}
\end{equation*}
with $\pi(n,k)$ denoting the set of multi-indices $\alpha\in{\mathbb N}_0^{n-k+1}$ such that $\alpha_1+\alpha_2+\cdots=k$ and $\alpha_1+2\alpha_2+3\alpha_3+\cdots=n$. For the basic properties of these polynomials, we refer to Comtet's book \cite[Section~3.3]{Comtet}. A large collection of Bell identities and convolution formulas is available throughout the literature, see e.g.\ \cite{BBK08, BGW12, WW09}.

From its definition, it is clear that $B_{n,k}$ is homogeneous of degree $k$:
\[  B_{n,k}(az_1,\dots,az_{n-k+1}) = a^k B_{n,k}(z_1,\dots,z_{n-k+1}). \]
Thus, since $\Y^m(x) = \frac1m \Y(mx)$ for every $m\in\mathbb{N}$, we get from \eqref{eqn:invertBell} that the terms $y_{m,n}$ of the $m$-th invert transform of $x=(x_n)$ can be written as
\begin{equation*}
    y_{m,n} = \sum_{k=1}^n \frac{k!}{n!} m^{k-1} B_{n, k} (1!x_1, 2! x_2, \dotsc) \;\text{ for } n\ge 1. 
\end{equation*}

As we will illustrate in the next sections, this representation of $\Y^m$ provides an effective tool to enumerate restricted words and leads to interesting combinatorial formulas.

\section{Restricted words}
\label{sec:words}

For a fixed $b\in\mathbb{N}$ we consider the following family of {\em building blocks}:
\begin{quote}
Let $W_b(j)$ be the set of words of length $j$, starting with $b$, followed by $j-1$ letters from the alphabet $\{0,1,\dots,b-1\}$.
\end{quote}
Observe that $|W_b(j)| = b^{j-1}$. Moreover, any word of length $n$ over the alphabet $\{0,1,\dots,b\}$, starting with $b$, can be made by unique concatenation of blocks in $W_b(j)$ for $j=1,\dots,n$.

\medskip
Given $b\in\mathbb{N}$ and a sequence $f_0$ of nonnegative integers, let $\{U_j\}$ be a fixed sequence of subsets $U_j\subseteq W_b(j)$ such that $|U_j|=f_0(j)$ for $j=1,2,\dots$, and let
\begin{quote}
$\mathscr{U}_{b,n}(f_0)$ be the set of words of length $n$ over the alphabet $\{0,1,\dots,b\}$, starting with $b$, made by concatenation of blocks from $U_j$ for $j=1,\dots,n$. If $f_0(j)=0$, no block from $U_j$ is used.
\end{quote}

\begin{theorem} \label{thm:basicwords}
Let $f_0$ be a sequence of nonnegative integers such that $f_0(j)\le b^{j-1}$ for every $j\in\mathbb{N}$ and some $b\in\mathbb{N}$. Then the sequence
\begin{equation*}
 f_1(0)=1,\;\; f_1(n) = \sum_{k=1}^{n} \frac{k!}{n!} B_{n, k} (1!f_0(1), 2! f_0(2), \dotsc),
\end{equation*} 
counts the number of words in $\mathscr{U}_{b,n}(f_0)$. Moreover, each term $\frac{k!}{n!} B_{n, k} (1!f_0(1), 2! f_0(2), \dotsc)$ counts the number of such words made with exactly $k$ blocks.
\end{theorem}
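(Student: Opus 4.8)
The plan is to exploit the unique block decomposition recorded just before the theorem and reduce the count to a composition sum, which I then identify with the stated Bell-polynomial expression via the defining generating function of the partial Bell polynomials. The bulk of the work goes into the refined ``exactly $k$ blocks'' statement; summing over $k$ will recover the total count and, simultaneously, the closed formula for $f_1(n)$.

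First I would pin down the factorization. Since every block in $W_b(j)$ begins with the letter $b$ and is otherwise spelled over $\{0,\dots,b-1\}$, the occurrences of $b$ in a word $w=w_1\cdots w_n$ with $w_1=b$ mark precisely the left endpoints of its blocks; hence $w$ admits a \emph{unique} factorization into blocks, and $w\in\mathscr{U}_{b,n}(f_0)$ exactly when each factor of length $j$ belongs to $U_j$. A word built from exactly $k$ blocks therefore corresponds to a composition $n=j_1+\cdots+j_k$ with $j_i\ge 1$ together with, for each $i$, a choice of block in $U_{j_i}$; uniqueness of the factorization guarantees that distinct choices yield distinct words. Counting these choices gives
\[
  \#\{w\in\mathscr{U}_{b,n}(f_0): w \text{ has } k \text{ blocks}\}
  = \sum_{\substack{j_1+\cdots+j_k=n\\ j_i\ge 1}} \prod_{i=1}^{k} f_0(j_i),
\]
where the hypothesis $f_0(j)\le b^{j-1}=|W_b(j)|$ is precisely what ensures the subsets $U_j$ with $|U_j|=f_0(j)$ exist in the first place.

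Next I would pass to generating functions. Writing $F_0(t)=\sum_{j\ge 1}f_0(j)t^j$, the inner sum is the coefficient $[t^n]F_0(t)^k$. To match this with the Bell term I would invoke the standard identity
\[
  \sum_{n\ge k} B_{n,k}(z_1,z_2,\dots)\frac{t^n}{n!}
  = \frac{1}{k!}\Bigl(\sum_{j\ge 1} z_j\frac{t^j}{j!}\Bigr)^{k},
\]
taken with $z_j=j!\,f_0(j)$, so that the inner series collapses to $F_0(t)$. Comparing coefficients of $t^n$ then yields $\tfrac{k!}{n!}B_{n,k}(1!f_0(1),2!f_0(2),\dots)=[t^n]F_0(t)^k$, which is exactly the claimed per-$k$ count and proves the ``moreover'' part.

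Finally I would sum over $k$. Because $F_0(t)^k$ has lowest-order term $t^k$, only $1\le k\le n$ contribute to $[t^n]$, and $\sum_{k\ge 1}[t^n]F_0(t)^k=[t^n]\frac{F_0(t)}{1-F_0(t)}$, the $n$-th term of the invert transform $\Y(f_0)$; this simultaneously gives the total size of $\mathscr{U}_{b,n}(f_0)$ and, through \eqref{eqn:invertBell}, the stated sum for $f_1(n)$ (the case $n=0$ being the empty word built from zero blocks, consistent with $f_1(0)=1$). The one step that genuinely needs care is the uniqueness of the factorization: I must make sure the marker letter $b$ cannot occur inside a block, which is exactly why the blocks are defined with trailing letters restricted to $\{0,\dots,b-1\}$. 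Beyond stating the Bell generating-function identity cleanly, I expect no serious obstacle, as the remainder is bookkeeping.
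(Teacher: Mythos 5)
Your proof is correct and follows essentially the same route as the paper: unique block factorization (with the letter $b$ marking block boundaries), passage to the generating function $F_0(t)=\sum_j f_0(j)t^j$, and identification of the resulting counts with partial Bell polynomials. The only difference is one of organization---you derive the ``exactly $k$ blocks'' refinement first, via $[t^n]F_0(t)^k$ and the Bell generating-function identity, and then sum over $k$, whereas the paper first establishes the recurrence $f_1(n)=\sum_{j}f_0(j)f_1(n-j)$ to identify $f_1$ as the invert transform and then asserts the refinement as a consequence of the definition of $B_{n,k}$; your ordering makes the ``moreover'' clause more explicit but the underlying argument is the same.
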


\begin{proof}
By definition, every word in $\mathscr{U}_{b,n}(f_0)$ can be written as a unique concatenation of blocks from a subset of $W_b(j)$ with $f_0(j)$ elements for $j=1,\dots,n$. Any of these words must start with a block from $W_b(j)$ for some $j$, followed by a word in $\mathscr{U}_{b,n-j}(f_0)$. Thus we have the  recurrence relation
\begin{equation*}
  f_1(n) = \sum_{j=1}^n f_0(j) f_1(n-j) \; \text{ with }\; f_1(0) = 1,
\end{equation*}
which at the level of generating functions implies
\begin{equation*}
 1+\sum_{n=1}^\infty f_1(n) t^n = \frac{1}{1-\sum\limits_{n=1}^\infty f_0(n) t^n}.
\end{equation*} 
In other words, the sequence $f_1$ is the invert transform of $f_0$, which as discussed in the introduction, can be written in terms of partial Bell polynomials, as claimed. Finally, the statement about the number of words made with exactly $k$ blocks is a direct consequence of the definition of the partial Bell polynomials.
\end{proof}

The case of binary building blocks ($b=1$) will be discussed with more details in the next section, and the importance of greater values of $b$ will be illustrated in Section~\ref{sec:nonbinary}. 

\begin{remark}
We will choose $b$ based on the type of restricted words we are interested in counting, not on the size of the alphabet. Once $b$ is fixed, we can enlarge the alphabet by repeatedly applying the invert transform.
\end{remark}

\medskip
Given $b\in\mathbb{N}$ and a sequence $f_0$ of nonnegative integers, let $\{U_j\}$ be a fixed sequence of subsets $U_j\subseteq W_b(j)$ such that $|U_j|=f_0(j)$ for $j=1,2,\dots$. Every word in $U_j$ has length $j$ and starts with $b$ followed by $j-1$ letters from the alphabet $\{0,1,\dots,b-1\}$. If we allow the character $b$ to be replaced by any element of the set $\{b,\dots,b+m-1\}$ for $m\in\mathbb{N}$, then every word in $U_j$ generates $m$ words over the alphabet $\{0,1,\dots,b+m-1\}$. Thus each $U_j$ gives rise to a set of words of length $j$, call it $U_j^m$, such that $|U_j^m| = m\cdot |U_j|$. We let

\begin{quote}
$\mathscr{U}_{b,n}^m(f_0)$ be the set of words of length $n$ over the alphabet $\{0,1,\dots,b+m-1\}$, \emph{starting with} $b$, made by concatenation of blocks from $U_j^m$ for $j=1,\dots,n$. 
If $f_0(j)=0$, no block from $U_j^m$ is used.
\end{quote}

\begin{corollary} \label{cor:m_words}
Let $f_0$ be a sequence of nonnegative integers such that $f_0(j)\le b^{j-1}$ for every $j\in\mathbb{N}$ and some $b\in\mathbb{N}$. For $m\in\mathbb{N}$, the sequence
\begin{equation*}
 f_m(0)=1,\;\; f_m(n) = \sum_{k=1}^{n} \frac{k!}{n!} m^{k-1} B_{n, k} (1!f_0(1), 2! f_0(2), \dotsc),
\end{equation*} 
counts the number of words in $\mathscr{U}_{b,n}^m(f_0)$. Note that $f_m$ is the $m$-th invert transform of $f_0$. 
\end{corollary}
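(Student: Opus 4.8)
The plan is to deduce the statement from Theorem~\ref{thm:basicwords} by keeping track of the refinement ``words made with exactly $k$ blocks'' and accounting for the extra freedom gained when each building block is allowed an expanded leading letter. The formula for $f_m$ itself requires no new work: as computed in the introduction, the homogeneity of the partial Bell polynomials together with the identity $\Y^m(x)=\tfrac1m\Y(mx)$ shows that $\sum_{k=1}^n \tfrac{k!}{n!}\,m^{k-1}B_{n,k}(1!f_0(1),2!f_0(2),\dots)$ is exactly the $n$-th term of the $m$-th invert transform of $f_0$. Hence the only substantive claim is the combinatorial interpretation, namely that $|\mathscr{U}_{b,n}^m(f_0)|=f_m(n)$.

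First I would record that each word in $\mathscr{U}_{b,n}^m(f_0)$ admits a \emph{unique} decomposition into blocks. A block in $U_j^m$ consists of a leading letter from $\{b,\dots,b+m-1\}$ followed by $j-1$ letters from $\{0,\dots,b-1\}$, so a word is cut into its constituent blocks precisely by splitting immediately before each letter that is $\ge b$. This makes the number of blocks $k$ a well-defined invariant of each word, exactly as in the $b$-alphabet case underlying Theorem~\ref{thm:basicwords}.

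Next I would set up the key fiber count. A word $w\in\mathscr{U}_{b,n}^m(f_0)$ must start with $b$, so its first block starts with $b$ and therefore lies in $U_{j_1}\subseteq U_{j_1}^m$ with no expansion freedom, whereas each of the remaining $k-1$ blocks may begin with any of the $m$ letters $b,\dots,b+m-1$. The forgetful map that resets every leading letter to $b$ then sends $w$ to a word in $\mathscr{U}_{b,n}(f_0)$ with the same block structure, and by unique decomposition its fibers have size exactly $m^{k-1}$. Since Theorem~\ref{thm:basicwords} counts the words in $\mathscr{U}_{b,n}(f_0)$ with exactly $k$ blocks as $\tfrac{k!}{n!}B_{n,k}(1!f_0(1),2!f_0(2),\dots)$, summing over $k$ gives
\[
 |\mathscr{U}_{b,n}^m(f_0)| = \sum_{k=1}^n m^{k-1}\,\frac{k!}{n!}\,B_{n,k}(1!f_0(1),2!f_0(2),\dots) = f_m(n).
\]

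The main obstacle I anticipate is not the counting but the bookkeeping of the ``off by one'' exponent: the first block is pinned down by the requirement that $w$ start with $b$, which is precisely why one obtains $m^{k-1}$ rather than $m^{k}$; the unique-decomposition step is what guarantees the fibers are genuinely independent $m$-fold choices. As a cross-check I would keep a purely generating-function route that bypasses the bijection: writing $F_0(t)=\sum_{j\ge1}f_0(j)t^j$, the words built from the blocks $U_j^m$ with no constraint on the first letter are counted by the invert transform of $mf_0$, so prepending one genuine $b$-block yields the generating function $1+F_0(t)/(1-mF_0(t)) = (1-(m-1)F_0(t))/(1-mF_0(t))$, which coincides with $1+Y_m(t)$ in the notation of the introduction.
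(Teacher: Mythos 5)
Your proposal is correct and follows essentially the same route as the paper: the paper's proof also invokes the refinement of Theorem~\ref{thm:basicwords} by the number of blocks $k$, and obtains the factor $m^{k-1}$ by letting each of the blocks $w_{j_2},\dots,w_{j_k}$ (but not the first, which is pinned to start with $b$) be replaced by any of the $m$ corresponding blocks in $U_{j_i}^m$ --- precisely your $m^{k-1}$-to-one forgetful map, stated in the opposite direction. Your explicit unique-decomposition remark and the generating-function cross-check are welcome extra care but do not change the argument.
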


\begin{proof}
By Theorem~\ref{thm:basicwords}, the expression $\frac{k!}{n!} B_{n, k} (1!f_0(1), 2! f_0(2), \dotsc)$ counts the number of words in $\mathscr{U}_{b,n}(f_0)$ of the form $w_{j_1}\cdots w_{j_k}$, where each $w_{j_i}$ is a subword in $U_{j_i}$ for $i=1,\dots,k$. If each of the subwords $w_{j_2},\dots,w_{j_k}$ is allowed to be replaced by any of the corresponding $m$ blocks in $U_{j_2}^m,\dots,U_{j_k}^m$, respectively, we obtain $\frac{k!}{n!} m^{k-1}B_{n, k} (1!f_0(1), 2! f_0(2), \dotsc)$ words in $\mathscr{U}_{b,n}^m(f_0)$. Note that every word in $\mathscr{U}_{b,n}^m(f_0)$ can be constructed in this manner.

Adding over $k$ from $1$ to $n$, we obtain the total number of elements of $\mathscr{U}_{b,n}^m(f_0)$.
\end{proof}

\section{Restricted words of binary type}
\label{sec:binary}

For $b=1$, the building blocks in $W_b(j)$ are words of the form $10\cdots 0$ with $j-1$ zeros. Then, for $m\in\mathbb{N}$ and a given binary sequence $f_0$, the set 
\begin{quote}
$\mathscr{U}_{1,n}^m(f_0)$ consists of words of length $n$ over the alphabet $\{0,1,\dots,m\}$, starting with 1, made by concatenating blocks of the form $k\,0\cdots 0$ with $j-1$ zeros, $k\in\{1,\dots,m\}$, where blocks of length $j$ are used only if $f_0(j)=1$.
\end{quote}
For example, if $f_0$ is the sequence $1,0,1,1,0,0,\dots$, then the elements of $\mathscr{U}_{1,5}^2(f_0)$ are words of length 5 over the alphabet $\{0,1,2\}$, starting with 1, made by using only the building blocks of length 1, 3, and 4:
\begin{center}
 $1, 2 \in U_1^2$, \,$100, 200 \in U_3^2$, and $1000, 2000\in U_4^2$.
\end{center}
Moreover, the set $\mathscr{U}_{1,5}^2(f_0)$ consists of 32 such words:

\medskip
\begin{center}
\parbox{.65\textwidth}{
11111, 11112, 11121, 11211, 12111, \\
11122, 11212, 11221, 12112, 12121, 12211, \\
11222, 12122, 12212, 12221, 12222, \\
10011, 10012, 10021, 10022, 11001, 11002, 12001, 12002, \\
11100, 11200, 12100, 12200, 10001, 10002, 11000, 12000.
}
\end{center}
\medskip

Observe that the block restrictions given through the sequence $f_0$ carry over to the words in $\mathscr{U}_{1,n}^m(f_0)$. For instance, in the above example, the sequence $f_0$ induces the restrictions that $\mathscr{U}_{1,5}^2(f_0)$ contains no words with an isolated zero ($f_0(2)=0$) and no words with more than three consecutive zeros ($f_0(j)=0$ for $j\ge 5$).

\begin{remark}
By treating the first 1 as a dummy character, we get a natural bijection between $\mathscr{U}_{1,n+1}^m(f_0)$ and the set of all words of length $n$ with the restrictions determined by $f_0$.
\end{remark}

We proceed to illustrate the results from Section~\ref{sec:words} with a few examples. In the remaining part of this section, we will freely use some basic properties of partial Bell polynomials found in Comtet's book \cite[Section~3.3]{Comtet}. More intricate identities will be proved in the Appendix.

\begin{example}[{cf.\ \cite[Cor.~33]{Janjic15}}]
Suppose we wish to count the number of words of length $n$ over the alphabet $\{0, 1, \dots, m\}$ avoiding runs of $\ell$ zeros, $\ell\ge 1$. This can be done by computing $f_m(n+1)=\big|\mathscr{U}_{1,n+1}^m(f_0)\big|$ with
\begin{equation*}
  f_0(j) = 
 \begin{cases}
 1, & \text{if}\; 1\le j \le \ell; \\
 0, &\text{otherwise}.
 \end{cases}
\end{equation*}
By Corollary~\ref{cor:m_words}, we have
\begin{align*}
 f_m(n+1) &=  \sum_{k=1}^{n+1} \frac{k!}{(n+1)!} m^{k-1}\, B_{n+1, k} (1!, 2!, \dotsc , \ell!, 0,  \dotsc).
\end{align*}
By Identity~\ref{Bell_Identity1} in the Appendix, we have
\begin{equation*}
 B_{n+1, k} (1!, 2!, \dotsc , \ell!, 0,  \dotsc) = \frac{(n+1)!}{k!}
 \sum_{j=0}^{\lfloor\frac{n+1-k}{\ell}\rfloor} (-1)^j \binom{k}{j} \binom{n-\ell j}{k-1},
\end{equation*}
and therefore,
\begin{align*}
 f_m(n+1) &=  \sum_{k=1}^{n+1}
 \sum_{j=0}^{\lfloor\frac{n+1-k}{\ell}\rfloor} (-1)^j \binom{k}{j} \binom{n-\ell j}{k-1} m^{k-1}.
\end{align*}

For various values of $\ell$ and  $m$, this gives the sequences 
\seqnum{A000045}, \seqnum{A028859},  \seqnum{A125145}, \seqnum{A086347}, \seqnum{A180033},
\seqnum{A000073}, \seqnum{A119826}, \seqnum{A000078}, \seqnum{A209239}, \seqnum{A000322}
in Sloane \cite{Sloane}.
\end{example}

\begin{example} [{cf.\ \cite[Cor.~28]{Janjic15}}]
Consider words of length $n$ over the alphabet $\{0, 1, \dotsc, m\}$ avoiding runs of zeros of odd length. By adding a 1 to the left of each of these words, we obtain the set $\mathscr{U}_{1,n+1}^m(f_0)$ with
\begin{equation*}
  f_0(j) = 
 \begin{cases}
 1, &\text{if $j$ is odd};\\ 
 0, &\text{if $j$ is even}.
\end{cases}
\end{equation*}
In this case, $f_m(n+1)=\big|\mathscr{U}_{1,n+1}^m(f_0)\big|$ is given by
\begin{equation*}
  f_m(n+1) = \sum_{k=1}^{n+1} \frac{k!}{(n+1)!}m^{k-1}\, B_{n+1, k} (1!,0,3!,0,5!, \dotsc).
\end{equation*}
It is easy to see that $B_{n+1, k} (1!,0,3!,0,5!, \dotsc)=0$ unless $n+1+k$ is even. Moreover, 
\begin{align*}
 B_{n+1, k} (1!,0, 3!, 0, 5!, 0,\dotsc) 
 &= \frac{(n+1)!}{(n+1-k)!} B_{n+1-k,k}(0,2!,0,4!,0,\dots) \\
 &= \frac{(n+1)!}{(\frac{n+k+1}{2})!} B_{\frac{n+k+1}{2},k}(1!,2!,3!,\dots) = \frac{(n+1)!}{k!}\binom{\frac{n+k-1}{2}}{k-1}.
\end{align*}
Hence the number of elements in $\mathscr{U}_{1,n+1}^m(f_0)$ is given by
\begin{equation*}
  f_m(n+1) = \sum_{\substack{1\le k\le n+1\\ n+k+1\,\equiv\, 0\; (\text{mod }2)}} \binom{\frac{n+k-1}{2}}{k-1}m^{k-1} = \sum_{k=0}^{\lfloor \frac{n}{2}\rfloor} \binom{n-k}{k}m^{n-2k}. 
\end{equation*}

For $m=1,\dots,10$, this gives the sequences
\seqnum{A000045}, \seqnum{A000129}, \seqnum{A006190}, \seqnum{A001076}, \seqnum{A052918},
\seqnum{A005668}, \seqnum{A054413}, \seqnum{A041025}, \seqnum{A099371}, \seqnum{A041041} 
in Sloane \cite{Sloane}.
\end{example}

\begin{example}
For $\ell \in\mathbb{N}$ let 
\begin{equation*}
  f_0(j) = 
 \begin{cases}
 0, &\text{if}\; j \le \ell;\\ 
 1, &\text{otherwise}.
\end{cases}
\end{equation*}
Then the sequence
\begin{equation*}
  f_m(n) = \sum_{k=1}^{n} \frac{k!}{n!}m^{k-1}\, B_{n, k} (0, \dotsc,0, (\ell +1)!, (\ell+2)!, \dotsc) 
\end{equation*}
counts the number of words of length $n$ over the alphabet $\{0, 1, \dotsc, m\}$, starting with 1, such that every nonzero element is followed by at least $\ell$ zeros. 

Since
\[ B_{n,k}(0,\dots,0, (\ell+1)!,(\ell+2)!,\dots) = \frac{n!}{(n-\ell k)!} B_{n-\ell k,k}(1!,2!\dots) \text{ for } \ell k<n, \]
and since $B_{n-\ell k,k}(1!,2!,\dots)=\frac{(n-\ell k)!}{k!}\binom{n-\ell k-1}{k-1}$, we get
\begin{equation*}
  \frac{k!}{n!} B_{n,k}(0,\dots,0, (\ell+1)!,(\ell+2)!,\dots) = \binom{n-\ell k-1}{k-1} \;\text{ for } \ell k< n.
\end{equation*}
Note that $B_{n,k}(0,\dots,0, (\ell+1)!,(\ell+2)!,\dots)=0$ for $\ell k\ge n$. Thus we arrive at
\begin{equation*}
  f_m(n) = \sum_{k=1}^{\lfloor \frac{n-1}{\ell}\rfloor} \binom{n-\ell k-1}{k-1} m^{k-1}.
\end{equation*}

In this case, all words in $\mathscr{U}_{1,n}^m(f_0)$ for $n\ge 2\ell+1$ start with a 1 followed by at least $\ell$ zeros, and end with $\ell$ zeros. By removing these $2\ell+1$ common characters, we get words of length $n-2\ell-1$ such that every nonzero element not in the last position is followed by at least $\ell$ zeros. In conclusion, the number
\begin{equation*}
  f_m(n+2\ell+1) = \sum_{k=1}^{\lfloor \frac{n+2\ell}{\ell}\rfloor} \binom{n-\ell(k-2)}{k-1} m^{k-1}
\end{equation*}
counts the number of words of length $n$ over the alphabet $\{0, 1, \dotsc, m\}$ having at least $\ell$ zeros between any two nonzero elements.

For $\ell=1$ this example was discussed by Janji\'c \cite[Cor.~24]{Janjic15}, and for $\ell=2$ and $m=1$, the sequence $f_1(n+3)$ gives the Narayana cows sequence, \seqnum{A000930} in Sloane \cite{Sloane}. For $\ell=2$ and $m=2,\dots,4$, we get the sequences \seqnum{A003229}, \seqnum{A084386}, \seqnum{A089977}. 
Other values of $\ell$ and $m$ lead to: \seqnum{A003269}, \seqnum{A052942}, \seqnum{A143454},
\seqnum{A003520}, \seqnum{A143447}, \seqnum{A143455}, \seqnum{A005708}.
 \end{example}

\medskip
We finish the section with a couple of more elaborate examples.

\begin{example}
Let 
\begin{equation*}
  f_0(j) = 
 \begin{cases}
 1, &\text{if } j = 1 \text{ or } j=\ell+1; \\ 
 0, &\text{otherwise}.
\end{cases}
\end{equation*}
The sequence $f_m(n+1)=\big|\mathscr{U}_{1,n+1}^m(f_0)\big|$ counts the number of words of length $n$ over the alphabet $\{0, 1, \dotsc, m\}$ containing zeros only in substrings of length $\ell$. By Corollary~\ref{cor:m_words},
\begin{equation*}
  f_m(n+1) = \sum_{k=1}^{n+1} \frac{k!}{(n+1)!}m^{k-1}\, B_{n+1, k} (1!, 0, \dotsc, (\ell + 1)!, 0 \dotsc).
\end{equation*}
By Identity~\ref{Bell_Identity2} in the Appendix, we have
\begin{equation*}
 B_{n, k} (1!, 0, \dotsc, (\ell + 1)!, 0 \dotsc ) 
 = \binom{n}{n-k+\frac{n-k}{\ell}} \frac{(n-k+\frac{n-k}{\ell})!}{(\frac{n-k}{\ell})!} \;\text{ for } n - k \,\equiv\, 0\; (\text{mod }\ell),
\end{equation*}
and therefore,
\begin{equation*}
  f_m(n+1) = \sum_{\substack{1\le k\le n+1\\ n+1 \,\equiv\, k\; (\text{mod }\ell)}} \!\!\frac{k!}{(n+1)!}\, m^{k-1} 
  \binom{n+1}{n+1-k+\frac{n+1-k}{\ell}} \frac{(n+1-k+\frac{n+1-k}{\ell})!}{(\frac{n+1-k}{\ell})!}.
\end{equation*}
Writing $n+1 = q\ell  + r$ with $0\le r < \ell$, we  obtain
\begin{align*}
  f_m(n+1) &= \!\!\sum_{\substack{1\le k\le n+1\\ n+1 \,\equiv\, k\; (\text{mod }\ell)}} \!\frac{k!}{(q\ell+ r)!} m^{k-1} \binom{q\ell + r}{q\ell + r-k+\frac{q\ell + r-k}{\ell}} \frac{(q \ell + r- k+\frac{q\ell + r-k}{\ell})!}{(\frac{q \ell + r-k}{\ell})!} \\ 
&= \sum_{i=0}^q \frac{(\ell i + r)!}{(q\ell  + r)!}  m^{\ell i + r -1}
  \binom{q\ell + r}{(\ell +1) (q-i)} \frac{((\ell+1) (q- i))!}{(q-i)!}  
 = \sum_{i=0}^q \binom{\ell i+r}{q-i} m^{\ell i + r -1}.
\end{align*}
In conclusion, the number of words of length $n$ over the alphabet $\{0, 1, \dotsc, m\}$ containing zeros only in substrings of length $\ell$, is given by
\begin{equation*}
  f_m(n+1) = \sum_{i=0}^{\lfloor \frac{n+1}{\ell}\rfloor} \binom{n+1-\ell i}{i} m^{n - \ell i}.
\end{equation*}

For $\ell=2$ and $m=1,\dots,6$, this gives the sequences
\seqnum{A068921}, \seqnum{A255115}, \seqnum{A255116}, \seqnum{A255117}, \seqnum{A255118}, 
\seqnum{A255119} in Sloane \cite{Sloane}. And for $m=1$ and $\ell=3,\dots,7$, we get
\seqnum{A003269}, \seqnum{A003520}, \seqnum{A005708}, \seqnum{A005709}, \seqnum{A005710}.
\end{example}

\begin{example}
For $\ell \in\mathbb{N}$ let 
\begin{equation*}
  f_0(j) = 
 \begin{cases}
 0, &\text{if}\; j = \ell + 1;\\ 
 1, &\text{otherwise}.
\end{cases}
\end{equation*}
Then the sequence
\begin{equation*}
  f_m(n+1) = \sum_{k=1}^{n+1} \frac{k!}{(n+1)!}m^{k-1}\, B_{n+1, k} (1!,  \dotsc, \ell!, 0, (\ell +2)!, (\ell+3)!, \dotsc) 
\end{equation*}
counts the number of words of length $n$ over the alphabet $\{0, 1, \dotsc, m\}$ avoiding runs of exactly $\ell$ zeros. By means of Identity~\ref{Bell_Identity3} in the Appendix, the partial Bell polynomials can be written as
\begin{align*}
B_{n+1, k} (1!,  \dotsc, \ell!,\, & 0, (\ell +2)!, (\ell+3)!, \dotsc) \\
&= \frac{(n+1)!}{k!} \left(\sum_{\kappa< k} (-1)^\kappa \binom{k}{\kappa} 
 \binom{n-(\ell+1)\kappa}{k-\kappa-1}+ (-1)^k \delta_{n+1,(\ell+1)k} \right),
\end{align*}
where $\delta_{n+1,(\ell+1)k}$ is the Kronecker delta function. Thus
\begin{equation*}
 f_m(n+1) = \sum_{k=1}^{n+1} \left(\sum_{\kappa< k} (-1)^\kappa \binom{k}{\kappa} 
 \binom{n-(\ell+1)\kappa}{k-\kappa-1} + (-1)^k \delta_{n+1,(\ell+1)k} \right) m^{k-1}.
\end{equation*} 
If $n+1=(\ell+1)q$, then
\begin{equation*}
 f_m(n+1) = (-1)^{q}m^{q-1} + \sum_{\kappa=0}^{q-1}\;  \sum_{k=\kappa+1}^{n+1-\ell\kappa} 
 (-1)^\kappa \binom{k}{\kappa}\binom{n-(\ell+1)\kappa}{k-\kappa-1} m^{k-1},
\end{equation*} 
and if $n+1\not\equiv 0$ modulo $(\ell+1)$, then
\begin{equation*}
 f_m(n+1) = \sum_{\kappa=0}^{\lfloor \frac{n+1}{\ell+1}\rfloor}\; \sum_{k=\kappa+1}^{n+1-\ell\kappa} 
 (-1)^\kappa  \binom{k}{\kappa}\binom{n-(\ell+1)\kappa}{k-\kappa-1} m^{k-1}.
\end{equation*}

For various values of $\ell$ and  $m$, this gives the sequences
\seqnum{A005251}, \seqnum{A120925}, \seqnum{A255813}, \seqnum{A255814}, \seqnum{A255815}, 
\seqnum{A049856}, \seqnum{A108758} in Sloane \cite{Sloane}.
\end{example}

\section{Further (nonbinary) examples}
\label{sec:nonbinary}

In \cite[Corollary 37]{Janjic15}, Janji\'c discusses $01$-avoiding words over a finite alphabet by considering the $m$-th invert transform of the sequence $f_0(j)=j$ for $j\in\mathbb{N}$. He verifies that $f_m$ satisfies the same recurrence relation as the sequence counting such words, but there is no other indication of how this $f_0$ is combinatorially related to the given restriction. 

In this section, we will use our results from Section~\ref{sec:words} to shed some light on the counting of $01$-avoiding words and will discuss examples that lead to the invert transform of the triangle numbers and other more general figurate numbers.

\begin{example}[{cf.\ \cite[Cor.~37]{Janjic15}}] \label{01avoiding}
We start by considering words of length $n$ over the alphabet $\{0,1,2\}$, starting with 2, that avoid the pattern $01$. With the terminology from Section~\ref{sec:words}, this can be done with blocks in $W_b(j)$ with $b=2$ and $j=1,\dots,n$.  For example, for $j=1,\dots,4$, we have the building blocks

\medskip
\begin{center}
\begin{tabular}{c@{\hspace{2em}}c@{\hspace{2em}}c@{\hspace{2em}}c}
$W_2(1)$ & $W_2(2)$ & $W_2(3)$ & $W_2(4)$ \\ \hline
&&& \\[-10pt]
2 & 20 & 200 & 2000 \\
& 21 & \textcolor{red}{\em 201} & \textcolor{red}{\em 2001} \\
&& 210 & \textcolor{red}{\em 2010} \\
&& 211 & \textcolor{red}{\em 2011} \\
&&& 2100 \\
&&& \textcolor{red}{\em 2101} \\
&&& 2110 \\
&&& 2111
\end{tabular}
\end{center}
\medskip

\noindent
and the colored blocks are the ones we wish to avoid. Observe that, once these building blocks are removed, every word made by concatenation of the remaining blocks will be a word that starts with 2 and avoids the subword $01$. Also note that the building blocks avoiding $01$ are precisely those where the digits are in nonincreasing order.

Let $U_j$ be the subset of $W_2(j)$ consisting of the $j$ building blocks whose digits are in nonincreasing order. With this choice, the number of $01$-avoiding words of length $n$, starting with 2, is equal to the number of elements in $\mathscr{U}_{2,n}(f_0)$ with $f_0$ given by $f_0(j)=j$ for $j=1,2,\dots$, recall the definition before Theorem~\ref{thm:basicwords}. By removing the starting digit 2 from each word in $\mathscr{U}_{2,n}(f_0)$, we get a natural bijection to the set of $01$-avoiding words of length $n-1$ over the alphabet $\{0,1,2\}$. 

The number of such words is then counted by the invert transform of $f_0$, see Theorem~\ref{thm:basicwords}. Moreover, the set $\mathscr{U}_{2,n}^m(f_0)$ for $m\ge 1$ contains all $01$-avoiding words of length $n$, starting with 2, over the larger alphabet $\{0,1,\dots,m+1\}$. If $f_m(n)=|\mathscr{U}_{2,n}^m(f_0)|$, Corollary~\ref{cor:m_words} together with Identity~\ref{Bell_Identity4} with $r=0$ (see Appendix) give
\begin{equation*}
 f_m(n) = \sum_{k=1}^{n} \frac{k!}{n!} m^{k-1} B_{n, k} (1!\cdot 1, 2!\cdot 2, 3!\cdot 3,\dotsc)
 =\sum_{k=1}^{n} \binom{n+k-1}{n-k} m^{k-1} \;\text{ for } n>1.
\end{equation*} 

In conclusion, if $f_0(j)=j$ for $j\in\mathbb{N}$, then for $n>1$ the number of $01$-avoiding words of length $n-1$ over the alphabet $\{0,1,\dots,m+1\}$ is given by
\begin{equation*}
 f_m(n) = \sum_{k=1}^{n} \binom{n+k-1}{n-k} m^{k-1}.
\end{equation*} 

For $m=1,\dots,10$, this gives the sequences
\seqnum{A001906}, \seqnum{A001353}, \seqnum{A004254}, \seqnum{A001109}, \seqnum{A004187}, \seqnum{A001090}, \seqnum{A018913}, \seqnum{A004189}, \seqnum{A004190}, \seqnum{A004191}
in Sloane \cite{Sloane}.
\end{example}

\begin{corollary}
It was noted in \cite[Corollary~37]{Janjic15} that the above sequence $f_m(n)$ coincides with $U_{n-1}(\tfrac{m+2}{2})$, where $U_{n}$ is the $n$-th Chebyshev polynomial of the second kind. As a consequence, we obtain the  identity:
\begin{equation*}
 U_{n}(\tfrac{x+2}{2}) = \sum_{k=0}^{n} \binom{n+k+1}{n-k} x^{k}.
\end{equation*} 
\end{corollary}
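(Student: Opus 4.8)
The plan is to obtain the identity purely formally by equating the two expressions we already have for $f_m(n)$, and then upgrading the resulting equality of integers into an equality of polynomials. Concretely, Example~\ref{01avoiding} gives, for $n>1$, the closed form
\[
 f_m(n) = \sum_{k=1}^{n} \binom{n+k-1}{n-k}\, m^{k-1},
\]
while the identification recalled from \cite{Janjic15} asserts that $f_m(n)=U_{n-1}\bigl(\tfrac{m+2}{2}\bigr)$. Setting these two expressions equal is the entire content of the corollary; the remaining work is a matter of reindexing together with a short degree count.

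First I would replace $n$ by $n+1$ throughout, so that the left-hand side becomes $U_n\bigl(\tfrac{m+2}{2}\bigr)$ and the sum runs over $k=1,\dots,n+1$ with summand $\binom{n+k}{n+1-k}m^{k-1}$. Shifting the summation index via $k\mapsto k+1$ then turns this into $\sum_{k=0}^{n}\binom{n+k+1}{n-k}m^{k}$, which matches the right-hand side of the claimed identity verbatim with $x=m$. At this stage one has
\[
 U_n\bigl(\tfrac{m+2}{2}\bigr)=\sum_{k=0}^{n}\binom{n+k+1}{n-k}\, m^{k} \qquad\text{for every } m\in\mathbb{N}.
\]
(The hypothesis $n>1$ in Example~\ref{01avoiding} becomes $n\ge 1$ after the shift; the case $n=0$ is immediate, since both sides equal $1$.)

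The only point genuinely requiring care is the passage from an identity valid for all positive integers $m$ to an identity of polynomials in a formal variable $x$. For each fixed $n$, both sides are polynomials in $m$ of degree exactly $n$ with leading coefficient $1$: the right-hand side has top term $\binom{2n+1}{0}m^{n}=m^{n}$, while $U_n$ has degree $n$ and leading coefficient $2^n$, so $U_n\bigl(\tfrac{x+2}{2}\bigr)$ has top term $2^n\bigl(\tfrac{x}{2}\bigr)^n=x^n$. Two polynomials of degree at most $n$ that agree at the infinitely many points $m=1,2,3,\dots$ must coincide; hence the displayed equality holds identically, and we may substitute $x$ for $m$ to obtain the stated formula.

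I do not anticipate a substantive obstacle, since both expressions for $f_m(n)$ are already established. The subtlety, such as it is, lies entirely in justifying the specialization-to-formal-variable step, which is why I would be explicit about the degree of each side before invoking the principle that a polynomial is determined by its values at infinitely many arguments.
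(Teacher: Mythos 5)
Your argument is correct and is exactly the (unwritten) justification the paper intends: the corollary is stated without proof as an immediate consequence of equating the closed form from Example~\ref{01avoiding} with the Chebyshev identification from \cite{Janjic15}, followed by the reindexing you carry out. Your explicit degree count and the passage from agreement at all positive integers $m$ to a polynomial identity in $x$ are the right way to make that step rigorous, and they match what the authors leave implicit.
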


\begin{example} \label{010212avoiding}
Following the idea of the last example, we now let $b=3$ and let $U_j$ be the building blocks in $W_3(j)$ whose digits are in nonincreasing order. For instance, out of the 9 blocks in $W_3(3)$:
\begin{center} 
  300\quad \textcolor{red}{\em 301}\quad \textcolor{red}{\em 302}\quad 310\quad 311\quad 
  \textcolor{red}{\em 312}\quad 320\quad 321\quad 322 
\end{center}
there are 3 blocks that do not satisfy the restriction, so $U_3$ has only 6 elements. In general, for every $j\in\mathbb{N}$ there are $\binom{j+1}{2}$ such blocks, and they are precisely the building blocks that avoid the patterns $01$, $02$, and $12$. Thus with $f_0(j)=\binom{j+1}{2}$ and $U_j$ as above, the set $\mathscr{U}_{3,n}(f_0)$ consists of words of length $n$ over the alphabet $\{0,1,2,3\}$, starting with 3, and avoiding the patterns $01$, $02$, and $12$. Moreover, as discussed in Section~\ref{sec:words}, the set $\mathscr{U}_{3,n}^m(f_0)$ for $m\ge 1$ consists of words over the alphabet $\{0,1,\dots,m+2\}$, starting with 3, with the same restrictions as the words in $\mathscr{U}_{3,n}(f_0)$. Note that $\mathscr{U}_{3,n}^m(f_0)$ has the same number of elements as the set of words of length $n-1$ over $\{0,1,\dots,m+2\}$, avoiding $01$, $02$, and $12$.

Finally, by means of Corollary~\ref{cor:m_words} and Identity~\ref{Bell_Identity4} with $r=1$ (see Appendix), we conclude that for $n>1$ the sequence
\begin{equation*}
 f_m(n) = \sum_{k=1}^{n} \frac{k!}{n!} m^{k-1} B_{n, k} (1!\tbinom{2}{2}, 2!\tbinom{3}{2}, 3!\tbinom{4}{2},\dotsc)
 =\sum_{k=1}^{n} \binom{n+2k-1}{n-k} m^{k-1}
\end{equation*} 
gives the number of words of length $n-1$ over the alphabet $\{0,1,\dots,m+2\}$, avoiding the patterns $01$, $02$, and $12$.

For $m=1,2$, we get the sequences \seqnum{A052529} and \seqnum{A200676} in Sloane \cite{Sloane}.
\end{example}

\begin{remark}
In each of the previous two examples, $f_0$ is of the form $f_0(j)=\binom{j+r}{r+1}$ for $r\ge 0$. Example~\ref{01avoiding} corresponds to $r=0$ and Example~\ref{010212avoiding} is the case when $r=1$. Choosing $b=r+1$ for the construction of $\mathscr{U}_{b,n}^m(f_0)$, and choosing building blocks with a nonincreasing restriction on its digits, one gets words that avoid certain patterns.
\end{remark}

\begin{example}
Let $r\in\mathbb{N}$ and $f_0(j)=\binom{j+r}{r+1}$ for $j\ge 1$. Then for $n>1$, $f_m(n)=|\mathscr{U}_{r+1,n}^m(f_0)|$ counts the number of words of length $n-1$ over the alphabet $\{0,1,\dots,m+r+1\}$, avoiding all patterns of the form $a_1a_2$ with $a_1,a_2\in\{0,\dots,r+1\}$ and $a_1<a_2$. Applying Corollary~\ref{cor:m_words} together with Identity~\ref{Bell_Identity4} from the Appendix, we then get
\begin{equation*}
 f_m(n) =\sum_{k=1}^{n} \binom{n+(r+1)k-1}{n-k} m^{k-1}.
\end{equation*} 

For $m=1$ and $r=2,\dots,5$, this gives the sequences
\seqnum{A055991}, \seqnum{A079675}, \seqnum{A099242}, \seqnum{A099253} in Sloane \cite{Sloane}.
\end{example}

\bigskip
We finish this section by revisiting another example considered by Janji\'c \cite[Cor.~41]{Janjic15}. While our final formula does not seem to give information that could have not been obtained from the recurrence relation given in \cite{Janjic15}, the goal here is to provide more combinatorial insight and to illustrate the versatility of the building block approach introduced in Section~\ref{sec:words}.

\begin{example}[{cf.\ \cite[Cor.~41]{Janjic15}}]
For $q\in\mathbb{N}$ consider the set of $ii$-avoiding words of length $n$ over the alphabet $\{0,1,\dots,q\}$ for $i\in\{0,1,\dots,q-1\}$. This set of words can be completely described with building blocks in $W_q(j)$ for $j=1,\dots,n+1$. The elements of $W_q(j)$ are strings of length $j$, starting with $q$, followed by $j-1$ letters from the alphabet $\{0,1,\dots,q-1\}$. Clearly, there is only one word in $W_q(1)$, and there are $q$ words in $W_q(2)$,
 $q0, q1, \dots, q(q-1)$, none of which is of the form $ii$. Moreover, for $j>2$ there are exactly $q\cdot (q-1)^{j-2}$ $ii$-avoiding words in $W_q(j)$. Thus, if $f_0$ is defined by
\begin{equation*}
  f_0(j) =
 \begin{cases}
 1, & \text{if}\; j=1; \\
 q(q-1)^{j-2}, &\text{if}\; j\ge2;
 \end{cases}
\end{equation*}
then, by Theorem~\ref{thm:basicwords}, the sequence $f_1(n+1)=\sum_{k=1}^{n+1} \frac{k!}{(n+1)!} B_{n+1,k}(1!f_0(1), 2!f_0(2),\dots)$ gives the number of $ii$-avoiding words of length $n$ over $\{0,1,\dots,q\}$ for 
$0\le i<q$.

As before, Corollary~\ref{cor:m_words} gives a corresponding formula for the $m$-th invert transform, but before we work it out, let us simplify the partial Bell polynomial:
\begin{align*}
 B_{n+1, k} (1!f_0(1), 2! f_0(2), \dotsc)
 &=  B_{n+1, k} (1!, 2! q,3!q(q-1), 4!q(q-1)^2, \dotsc) \\
 &= \sum_{\ell=0}^k \frac{(n+1)!}{(n+1-k)!\ell!} B_{n+1-k, k-\ell} (q,2!q(q-1), 3!q(q-1)^2, \dotsc) \\
 &= \sum_{\ell=0}^k \frac{(n+1)!}{(n+1-k)!\ell!} q^{k-\ell}(q-1)^{n+1+\ell-2k}
 B_{n+1-k, k-\ell} (1!, 2!, \dotsc),
\end{align*} 
which for $k\le n$ implies 
\begin{equation*}
 \frac{k!}{(n+1)!} B_{n+1, k} (1!f_0(1), 2! f_0(2), \dotsc) 
 = \sum_{\ell=0}^{k-1} \binom{k}{\ell} \binom{n-k}{k-\ell-1} q^{k-\ell}(q-1)^{n+1+\ell-2k}.
\end{equation*}
Finally, by Corollary~\ref{cor:m_words}, we conclude that the number of $ii$-avoiding words of length $n$ over the alphabet $\{0,1,\dots,q+m-1\}$ for $i\in\{0,1,\dots,q-1\}$, is given by
\begin{equation*}
 f_m(n+1) = m^{n} + \sum_{k=1}^{n} 
 \sum_{\ell=0}^{k-1}\binom{k}{\ell} \binom{n-k}{k-\ell-1}q^{k-\ell}(q-1)^{n+1+\ell-2k} m^{k-1}.
\end{equation*} 

For various values of $m$ and $q$, this formula leads to the sequences
\seqnum{A000045}, \seqnum{A001333}, \seqnum{A055099}, \seqnum{A123347}, \seqnum{A180035} in Sloane \cite{Sloane}.
\end{example}

\section{Appendix: Bell polynomial identities}
\label{sec:appendix}

\begin{identity}\label{Bell_Identity1}
For $\ell\in\mathbb{N}$ we have
\begin{equation*}
 B_{n, k} (1!, 2!, \dotsc , \ell!, 0,  \dotsc) = \frac{n!}{k!} \sum_{j=0}^{\lfloor\frac{n-k}{\ell}\rfloor} (-1)^j \binom{k}{j} \binom{n-\ell j-1}{k-1}.
\end{equation*}
\end{identity}
A proof of this identity can be found in \cite[Theorem~4.1 combined with Theorem~2.1]{BBK08}.

\begin{identity}\label{Bell_Identity2}
For $\ell\in\mathbb{N}$ with $\ell>1$, we have
\begin{equation*}
 B_{n, k} (1!, 0, \dotsc, (\ell + 1)!, 0 \dotsc ) = \binom{n}{n-k+\frac{n-k}{\ell}} \frac{(n-k+\frac{n-k}{\ell})!}{(\frac{n-k}{\ell})!} 
\end{equation*}
whenever $n - k \,\equiv\, 0\; (\text{mod }\ell)$.
\end{identity}
\begin{proof}
Using \cite[{Eqs.\ (3n) and (3n'), Section~3.3}]{Comtet}, we have
\begin{align*}
B_{n, k} (1!, 0, \dotsc, (\ell + 1)!, 0 \dotsc ) 
&= \sum_{\kappa =0}^k\sum_{\nu = \kappa}^ n \binom{n}{\nu} B_{\nu, \kappa}(0 \dotsc, (\ell + 1)!, 0 \dotsc )  B_{n-\nu, k-\kappa}(1, 0,  0, \dotsc ) \\ 
&= \sum_{\kappa =0}^k \binom{n}{n-k+\kappa} B_{n-k+\kappa, \kappa}(0 \dotsc, (\ell+1)!, 0 \dotsc )  \\ 
&= \binom{n}{n-k+\frac{n-k}{\ell}} \frac{(n-k+\frac{n-k}{\ell})!}{(\frac{n-k}{\ell})!} \;\text{ for } n - k \,\equiv\, 0\; (\text{mod }\ell).
\end{align*}
\end{proof}

\begin{identity}\label{Bell_Identity3}
For $\ell\in\mathbb{N}$,
\begin{multline*}
\quad B_{n, k} (1!,  \dotsc, \ell!, 0, (\ell +2)!, (\ell+3)!, \dotsc) \\
= \frac{n!}{k!} \left(\sum_{\kappa=0}^{k-1} (-1)^\kappa \binom{k}{\kappa} 
 \binom{n-(\ell+1)\kappa-1}{k-\kappa-1}+ (-1)^k \delta_{n,(\ell+1)k} \right)\!, \qquad
\end{multline*}
where $\delta_{n,(\ell+1)k}$ is the Kronecker delta function.
\end{identity}
\begin{proof}
Let $\bar x_\ell$, $\bar e_\ell$, and $\bar x$ be defined as 
\begin{center}
$\bar x_\ell=(1!,\dots,\ell!,0,(\ell+2)!,\dots)$, \ $\bar e_\ell=(0,\dots,0,(\ell+1)!,0,\dots)$, and 
$\bar x = \bar e_\ell + \bar x_\ell$. 
\end{center}
By means of \cite[{Eqs.\ (3n) and (3n'), Section~3.3}]{Comtet}, we then have
\begin{align*}
 B_{n,k}(\bar x_\ell) &= \!\sum_{\kappa\le k,\, \nu\le n} \binom{n}{\nu} 
 B_{\nu,\kappa}(-\bar e_\ell)B_{n-\nu,k-\kappa}(\bar x) \\
 &= \sum_{\kappa\le k} \binom{n}{(\ell+1)\kappa}\frac{((\ell+1)\kappa)!}{\kappa!} 
 (-1)^\kappa B_{n-(\ell+1)\kappa,k-\kappa}(\bar x) \\
 &= \frac{n!}{k!}\sum_{\kappa=0}^{k-1} (-1)^\kappa \binom{k}{\kappa} \binom{n-(\ell+1)\kappa-1}{k-\kappa-1} 
 + (-1)^k\frac{n!}{k!(n-(\ell+1)k)!} B_{n-(\ell+1)\kappa,0}(\bar x) \\
 &= \frac{n!}{k!} \left(\sum_{\kappa=0}^{k-1} (-1)^\kappa \binom{k}{\kappa} 
 \binom{n-(\ell+1)\kappa-1}{k-\kappa-1}+ (-1)^k \delta_{n,(\ell+1)k} \right).
\end{align*}
\end{proof}

\begin{identity}\label{Bell_Identity4}
For $r\ge 0$ let $t_j(r) =\binom{j+r}{r+1}$. Then
\begin{equation*}
  B_{n, k}(1! t_1(r), 2! t_2(r), \cdots) = \frac{n!}{k!}\binom{n+(r+1)k-1}{n-k}.
\end{equation*}
\end{identity}
\begin{proof}
For $k=1$ and all $n$ we have 
\begin{equation*}
  B_{n, 1}(1! t_1(r), 2! t_2(r), \cdots) = \frac{n!}{1!}\binom{n+r}{r+1} = \frac{n!}{1!}\binom{n+r}{n-1}.
\end{equation*}
Now we proceed by induction on $k$. For $k>1$, 
\begin{align*}
 B_{n, k}(1! t_1(r), 2! t_2(r), \cdots)  
 &= \frac1{k} \sum_{j= 0}^n \binom{n}{j} j!\binom{j+r}{r+1}  B_{n-j, k-1}(1! t_1(r), 2! t_2(r), 3! t_3(r), \cdots)\\
 &= \frac1{k} \sum_{j= 1}^n \binom{n}{j} j!\binom{j+r}{r+1}  \frac{(n-j)!}{(k-1)!} \binom{n-j+(r+1)(k-1)-1}{n-j-k+1}\\
 &= \frac{n!}{k!}\sum_{j= 1}^{n-k+1} \binom{j+r}{j-1}  \binom{n-j+(r+1)(k-1)-1}{n-j-k+1} \\
 &= \frac{n!}{k!}\sum_{j= 0}^{n-k} \binom{j+r+1}{j}  \binom{n-j+(r+1)(k-1)-2}{n-k-j} \\
 &= \frac{n!}{k!}\sum_{j= 0}^{n-k} (-1)^{n-k} \binom{-r-2}{j} \binom{-(r+1)(k-1)-k+1}{n-k-j}.
\end{align*}
Using Chu-Vandermonde's identity, we arrive at
\begin{equation*}
 B_{n, k}(1! t_1(r), 2! t_2(r), \cdots) = (-1)^{n-k} \frac{n!}{k!}\binom{-(r+1)k-k}{n-k} 
 = \frac{n!}{k!}\binom{n + (r+1)k - 1}{n-k} .
\end{equation*}
\end{proof}

\end{document}